\documentclass[11pt]{amsart}

\usepackage{amsmath,amssymb,amsthm,mathtools}
\usepackage[margin=1.2in]{geometry}
\usepackage{hyperref}

\newtheorem{theorem}{Theorem}[section]
\newtheorem{proposition}[theorem]{Proposition}
\newtheorem{lemma}[theorem]{Lemma}
\newtheorem{corollary}[theorem]{Corollary}
\theoremstyle{remark}
\newtheorem{remark}[theorem]{Remark}

\theoremstyle{definition}
\newtheorem{definition}[theorem]{Definition}

\newcommand{\Z}{\mathbb{Z}}
\newcommand{\R}{\mathbb{R}}
\newcommand{\C}{\mathbb{C}}
\newcommand{\Q}{\mathbb{Q}}
\newcommand{\OO}{\mathcal{O}}

\DeclareMathOperator{\Gal}{Gal}

\title[Monogenity of $(k,t)$-Fibonacci and $(k,t)$-Lucas polynomials]
      {Monogenity of the irreducible factors of\\ $(k,t)$-Fibonacci and $(k,t)$-Lucas
       polynomials}

\author{Michail Karatarakis}
\address[{Michail Karatarakis}]{Radboud University, Nijmegen, The Netherlands
}
\email{michail.karatarakis@ru.nl}

\author{Sumandeep Kaur}
\address[Sumandeep Kaur\footnote{Corresponding author}]{Department of Mathematics, Shanghai University, China} 
\email{suman@shu.edu.cn}
\date{}
\subjclass[2020]{11R04, 11R09, 11B39, 11C08}
\keywords{Monogenity, Fibonacci polynomials,
Lucas polynomials, $(k,t)$-Fibonacci polynomials, $(k,t)$-Lucas polynomials,
Dickson polynomials, cyclotomic fields, Chebyshev polynomials}

\begin{document}

\begin{abstract}
In this article, we study the monogenity of the number field generated by a root of an irreducible factor of the generalized $(k,t)$-Fibonacci and $(k,t)$-Lucas polynomial sequence. We use Dickson polynomials and their connection with cyclotomic fields to prove our main results. As special cases, we recover the monogenity results for the classical Fibonacci and Lucas polynomials and obtain corresponding monogenity results for Chebyshev polynomials of the first and second kinds.

\end{abstract}

\maketitle

\section{Introduction}\label{sec:intro}

Recall that an algebraic number field $K$ is said to be {monogenic} if $\OO_K=\Z[\theta]$ for some $\theta\in\OO_K$. In this case, $\{1,\theta,\dots,\theta^{[K:\Q]-1}\}$ is an integral basis of $K$; such an integral basis of $K$ is called a {power basis} of $K$. A monic irreducible polynomial $f\in\Z[X]$ is
said to be {monogenic} if it has a root $\alpha$ with $\Z[\alpha]=\OO_{\Q(\alpha)}$. Several classes of monogenic polynomials are known and their properties have been studied in the literature (\cite{Gaal}-\cite{HJ}, \cite{LJ}, \cite{LJ1}, \cite{LJ2}, \cite{LJJ}, \cite{Le1}, \cite{joo}, \cite {HS1}).

The Fibonacci polynomials $\{F_n\}_{n\geq 0}$ and Lucas polynomials
$\{L_n\}_{n\geq 0}$ are defined recursively by
$F_0(X)=0,~ F_1(X)=1,~
F_{n+2}(X)=X\,F_{n+1}(X)+F_n(X),~ n\geq 0,$
and
$L_0(X)=2,~ L_1(X)=X,~
L_{n+2}(X)=X\,L_{n+1}(X)+L_n(X),~ n\geq 0,
$
respectively. These polynomials  are one of the most important families of recursively defined polynomial sequences. They has a wide range of  algebraic and arithmetic properties and due to this, these polynomials have  applications in several areas of mathematics, like number theory, combinatorics, algebra, approximation theory, and the theory of linear recurrence sequences.  Their irreducibility, factorization, zeros, divisibility properties, discriminants, and Galois groups have been investigated by several mathematicians (see \cite{HB}). Recently, Chen, Guo and Hong \cite{CGH} studied the monogenity of these polynomials. Precisely they proved:

\begin{theorem}[\cite{CGH}]\label{thm:CGH}
For every odd $n$, all irreducible factors of $F_n$ are monogenic.  For every even $n$, all
irreducible factors of $L_n$ are monogenic.
\end{theorem}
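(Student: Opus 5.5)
We plan to prove Theorem~\ref{thm:CGH} by identifying the irreducible factors of $F_n$ and $L_n$ explicitly via roots of unity. Write $X=z-z^{-1}$ with $z$ formal. Induction on the recursion gives the Binet forms
\[
F_n(z-z^{-1})=\frac{z^n-(-z^{-1})^n}{z+z^{-1}},\qquad L_n(z-z^{-1})=z^n+(-z^{-1})^n .
\]
Hence the roots of $F_n$ are $2i\cos(j\pi/n)$ for $1\le j\le n-1$, and the roots of $L_n$ are $2i\cos((2j+1)\pi/(2n))$. For odd $n$ the factor $i$ can be removed by a sign change: with $X=iY$, the polynomial $i^{-(n-1)}F_n(iY)$ is, up to sign, a Dickson/Chebyshev-type polynomial in $Y$ with roots $2\cos(j\pi/n)$. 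For odd $n$, $-\zeta$ runs over roots of unity of order $2d$ as $\zeta$ runs over those of order $d$, and $\cos(\pi j/n)$ equals $\pm\cos(2\pi j'/n)$. So we expect
\[
F_n(X)=\prod_{d\mid n,\ d>1}\Psi_d(X),
\]
where $\Psi_d$ is the minimal polynomial of $i\cdot 2\cos(2\pi/d)$ or of $2\cos(2\pi/d)$ up to $X\mapsto -X$. Since $n$ is odd, $F_n$ is even or odd in $X$, so only $X^2$-terms appear and the resulting factors are integral. We will verify this carefully.

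The key is how $i$ enters. For $n$ odd, $F_n(X)$ contains only even powers of $X$ up to parity: $F_n(X)=G(X^2)$ with $G$ having roots $-4\cos^2(j\pi/n)=-(2+2\cos(2j\pi/n))$. Thus each irreducible factor of $F_n$ is $h(X)$ with $h(X)=g(-X^2-2)$, where $g$ is the minimal polynomial $\Psi_d$ of $\beta=2\cos(2\pi/d)$ for $d\mid n$, $d>1$. The root $\alpha=i(\zeta^{1/2}+\zeta^{-1/2})$ can be rewritten as $\alpha=\eta-\eta^{-1}$, where $\eta=i\zeta_{2d}$ is a root of unity of order $4d$ for odd $d$. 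So each factor is the minimal polynomial of $\eta-\eta^{-1}$ with $\eta$ a primitive $4d$-th root of unity.

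The even-$n$ Lucas case is analogous: $z^{2n}=-(-1)^n$ with $n$ even gives $z^{4n}=1$ with $z^{2n}=-1$, and the roots are $\eta-\eta^{-1}$ with $\eta$ of order $m=4d'$, where $d'\mid n$ and $4d'\nmid 2n$, that is, $v_2(m)=v_2(n)+2$. In both cases the factors are the minimal polynomials of $\eta-\eta^{-1}$ for $\eta$ primitive of order $m$ with $4\mid m$. Irreducibility comes from Galois theory of $\Q(\zeta_m)$, via counting degrees.

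The monogenity step is the main obstacle. We must show $\Z[\eta-\eta^{-1}]=\OO_K$ for $K=\Q(\eta-\eta^{-1})$. When $4\mid m$ we have $i\in\Q(\eta)$, and $\eta-\eta^{-1}=i\,\cdot 2\cos(2\pi k/m)$-type expressions relate $K$ to the real subfield. We will argue that $\eta-\eta^{-1}=\eta+\bar\eta'$ with $\eta'=-\eta^{-1}\cdot$, and more precisely that $-\eta^{-1}=\eta^{m/2-1}$, so $\eta-\eta^{-1}=\eta+\eta^{\sigma}$ where $\sigma:\eta\mapsto\eta^{m/2-1}$ is an involution of $\Q(\zeta_m)$ because $(m/2-1)^2\equiv1\pmod m$ when $4\mid m$. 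Then $K$ is the fixed field of $\sigma$, and $\Q(\zeta_m)=K(\eta)$ with $\eta^2-\alpha\eta-1=0$. The relative discriminant of this equation is $\alpha^2+4$, a unit times a power of primes above $2$ (or above primes dividing $m$).

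Our plan is then the standard one for $\Q(\zeta_m)^+$. We have $\OO_{\Q(\zeta_m)}=\Z[\eta]=\Z[\alpha][\eta]$, since $\eta$ satisfies a monic quadratic over $\Z[\alpha]$. Hence $\Z[\alpha]\oplus\Z[\alpha]\eta=\OO_{\Q(\zeta_m)}$. Intersecting with $K$ gives $\OO_K=\Z[\alpha]$, because $\{1,\eta\}$ is a $K$-basis and the coefficient of $\eta$ in any element of $K$ must vanish. This argument is clean, and it requires only that $\eta$ is quadratic over $K$ and that $\Z[\eta]$ is the full ring of integers. Consequently, the real work lies in correctly pinning down the factorization and showing that each factor is exactly the minimal polynomial of such an $\alpha$, including the degree count and the parity conditions on $n$.
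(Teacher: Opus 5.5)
Your proof is correct, but its key step differs from the paper's. Locating the roots is essentially the same in both. Your Binet forms with $X=z-z^{-1}$ are Lemma~\ref{lem:binet} with $a=-1$. They give the paper's conditions $z^{2n}=-1$ and $z^2\neq-1$, hence $4\mid m$ and $m>4$. Your involution $\eta\mapsto\eta^{m/2-1}=-\eta^{-1}$ is exactly the device of Corollary~\ref{cor:nakahara} and Proposition~\ref{prop:critA} for showing that $K=\Q(\eta-\eta^{-1})$ is a proper subfield of $\Q(\eta)$.

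Where you diverge is the proof that $\OO_K\subseteq\Z[\alpha]$.
\begin{itemize}
\item The paper (Proposition~\ref{prop:main}) expands an integer of $K$ in the basis $1,u,\dots,u^{d-1}$ and multiplies by $\zeta^N$. By Lemma~\ref{lem:shift} this gives a polynomial in $\zeta$ of degree $\le 2N<\varphi(m)$ with leading coefficient $a_N$. It compares this with the representative in $\Z[\zeta]$ reduced modulo $\Phi_m$, and peels off coefficients by induction, using only the degree bound of Lemma~\ref{lem:degree}.
\item You instead use the monic relation $\eta^2-\alpha\eta-1=0$ to get $\Z[\eta]=\Z[\alpha]\oplus\Z[\alpha]\eta$, and then intersect with $K$, using $\eta\notin K$.
\end{itemize}
Both rest on $\OO_{\Q(\zeta_m)}=\Z[\zeta_m]$. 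Yours is shorter and applies verbatim to the paper's general $u=\zeta+c\zeta^{-1}$ via $\zeta^2-u\zeta+c=0$. It could therefore replace Proposition~\ref{prop:main} together with Lemmas~\ref{lem:shift} and~\ref{lem:degree}. The paper's version is a purely coefficient-level comparison that never names the relative quadratic extension.

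You place the ``real work'' in the wrong spot. The explicit factorization $F_n=\pm\prod_{d\mid n,\,d>1}\Psi_d(-X^2-2)$, the degree counts, and the relative discriminant are correct but unnecessary. (The discriminant is $\alpha^2+4=(\eta+\eta^{-1})^2$, if you keep that remark.) An irreducible factor $f$ of $F_n$ or $L_n$ has a root $x$, which is a root of $F_n$ or $L_n$. Hence $x=\eta-\eta^{-1}$ with $\eta$ of order $m$, $4\mid m$, $m>4$. Then $f$ is automatically the minimal polynomial of $x$, so your quadratic-intersection argument already gives $\OO_{\Q(x)}=\Z[x]$. This single-root approach is how the paper proceeds in the proof of Theorem~\ref{thm:main}.
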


Falcon and Plaza \cite{FP} introduced the following two-parameter generalization of classical Fibonacci and Lucas polynomails:
\begin{equation}\label{eq:ktdef}
\begin{aligned}
F^{(k,t)}_0&=0, &
F^{(k,t)}_1&=1, &
F^{(k,t)}_{n+2}&=kXF^{(k,t)}_{n+1}+tF^{(k,t)}_n,\\
L^{(k,t)}_0&=2, &
L^{(k,t)}_1&=kX, &
L^{(k,t)}_{n+2}&=kXL^{(k,t)}_{n+1}+tL^{(k,t)}_n,
\end{aligned}
\end{equation}
where $k,t\in\mathbb{Z}$. Theorem~\ref{thm:CGH} naturally motivates the study of monogenity for  families of these extended Fibonacci and extended Lucas polynomials. 

Note that the classical Fibonacci polynomials $F_n$ and Lucas polynomials $L_n$ are recovered by taking $(k,t)=(1,1)$. Consequently, Theorem~\ref{thm:CGH} corresponds precisely to this particular choice of parameters.

The aim of this paper is to investigate the monogenity of the irreducible factors of the generalized Fibonacci and Lucas polynomials defined in \eqref{eq:ktdef}. In this way, we extend the result of Chen et al. \cite{CGH}.\\

\noindent\textbf{Notations.} In what follows, all roots are taken in a fixed algebraic closure $\overline\Q\subset\C$.  For an integer
$n\ge1$ we write $\zeta_n:=e^{2\pi i/n}$, a primitive $n$-th root of unity,
$\varphi$ will stand for Euler's totient function, $\Phi_m(X)$ for the $m$-th cyclotomic polynomial, and
$\OO_K$ for the ring of integers of a number field $K$. An element $\zeta$ is a {primitive $m$-th root of unity} if it has order exactly $m$ in
$\C^\times$.  

Now, we state our main result.

\begin{theorem}\label{thm:main}
Let $k,s$ be nonzero integers and let $t=\pm s^2$.  Let $n\ge1$, and let $f$ be an irreducible factor of $F_n^{(k,t)}$ or
$L_n^{(k,t)}$. Let $x\in\overline{\Q}$ be a root of $f$ and set $ \theta:=\frac{k\,x}{s}.$
Then $\theta$ is an algebraic integer, $\Q(\theta)=\Q(x)$, and $\OO_{\Q(x)}=\Z[\theta]$.
 Moreover, there exists a root of unity $\xi$ such that $\theta$ has the form indicated in the last column of the following table. 
\begin{center}
\begin{tabular}{cccc}
\hline
$t$ & family & hypothesis on $n$ & $\theta$ equals\\
\hline
$+s^2$ & $F^{(k,t)}_n$ & $n$ odd & $\xi-\xi^{-1}$\\
$+s^2$ & $L^{(k,t)}_n$ & $n$ even, $n\ge2$ & $\xi-\xi^{-1}$\\
$-s^2$ & $F^{(k,t)}_n$ & $n\ge1$ & $\xi+\xi^{-1}$\\
$-s^2$ & $L^{(k,t)}_n$ & $n\ge1$ & $\xi+\xi^{-1}$\\
\hline
\end{tabular}
\end{center}
\end{theorem}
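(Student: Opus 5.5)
We reduce the problem to the Dickson/Chebyshev form and then to real subfields of cyclotomic fields. The closed forms come from the Binet formulas. Let $a,b$ be the roots of $Y^2-kXY-t$, so $a+b=kX$ and $ab=-t$. Then
\[
F^{(k,t)}_n=\frac{a^n-b^n}{a-b},\qquad L^{(k,t)}_n=a^n+b^n .
\]

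\textbf{Roots in cyclotomic form.} Put $a=s u$ and $b=-(t/s)u^{-1}$, so that $b=\mp s u^{-1}$ according as $t=\pm s^2$. Then
\[
L^{(k,t)}_n=s^n\bigl(u^n+(\mp1)^n u^{-n}\bigr),\qquad kX=s\bigl(u\mp u^{-1}\bigr),\qquad \theta=\frac{kx}{s}=u\mp u^{-1}.
\]
Thus $L^{(k,t)}_n(x)=0$ means $u^{2n}=-(\mp1)^n$, and $F^{(k,t)}_n(x)=0$ means $u^{2n}=(\mp1)^n$ with $a\ne b$. In every case $u$ is a root of unity $\xi$. For $t=-s^2$ we get $\theta=\xi+\xi^{-1}$. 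For $t=s^2$ we get $\theta=\xi-\xi^{-1}$. This gives the table, and shows that $\theta$ is an algebraic integer. Also $\Q(\theta)=\Q(x)$, because $\theta=kx/s$ with $k,s\neq0$ rational.

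\textbf{Identifying the field.} It remains to show $\OO_{\Q(\theta)}=\Z[\theta]$.

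For $t=-s^2$, $\theta=\zeta_m+\zeta_m^{-1}$ where $m$ is the order of $\xi$. It is classical that $\Z[\zeta_m+\zeta_m^{-1}]$ is the ring of integers of $\Q(\zeta_m)^+$. The argument: $\OO_{\Q(\zeta_m)}=\Z[\zeta_m]=\Z[\theta]\oplus\zeta_m\Z[\theta]$, since $\zeta_m^2=\theta\zeta_m-1$. Then intersect with the real subfield, using that $\{1,\zeta_m\}$ is a basis over $\Q(\theta)$ when $m>2$; the cases $m\le2$ are trivial.

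For $t=s^2$, write $\xi-\xi^{-1}=\xi+(\xi\zeta_4)^{-1}\cdot\zeta_4\cdots$; more cleanly, note that $i\theta=\eta+\eta^{-1}$ with $\eta=i\xi$ only up to sign adjustments. Instead I would argue directly. Let $m$ be the order of $\xi$. We have $\Z[\xi]=\Z[\theta]+\xi\Z[\theta]$, because $\xi^2=\theta\xi+1$. Hence, provided $[\Q(\xi):\Q(\theta)]=2$, the same intersection argument gives $\OO_{\Q(\theta)}=\Z[\theta]$. Concretely: if $\alpha=p+q\xi\in\OO_{\Q(\theta)}$ with $p,q\in\Z[\theta]$, then $q=0$ by linear independence, so $\alpha=p\in\Z[\theta]$.

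\textbf{Role of the parity hypotheses.} The hypotheses are exactly what guarantees $\xi\notin\Q(\theta)$, i.e. the degree-$2$ condition. For $F_n$ with $n$ odd we have $u^{2n}=1$. For $L_n$ with $n$ even we have $u^{2n}=-1$. I expect to check that in these cases $\xi$ is not fixed by the automorphism $\xi\mapsto-\xi^{-1}$, which fixes $\theta$. That automorphism exists precisely when $-\xi^{-1}$ is a Galois conjugate of $\xi$, i.e. when $-1\in\langle\xi\rangle$-compatible orders match. Since $\xi$ and $-\xi^{-1}$ have the same order when $4\mid m$ or when $m$ is odd or $\equiv2\pmod4$ with care, this requires a case check on $m\bmod4$. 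If $[\Q(\xi):\Q(\theta)]=1$, i.e. $\xi=-\xi^{-1}$ up to the relevant structure, then $\xi=\pm i$ and $\theta=\pm2i$; such degenerate roots are excluded by the hypotheses or handled by hand.

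\textbf{Main obstacle.} I expect the main difficulty to be the case $t=+s^2$ in full generality: showing that $\Q(\xi)\supsetneq\Q(\theta)$ has degree exactly $2$ with basis $\{1,\xi\}$. The first attempt is to use that $\xi$ is a root of $Y^2-\theta Y-1\in\Q(\theta)[Y]$. Then it suffices to show that $\xi\notin\Q(\theta)$, equivalently $\theta^2+4\neq$ a square in $\Q(\theta)$, i.e. $(\xi+\xi^{-1})\notin\Q(\theta)$. The point where the parity of $n$ must enter is excluding $\xi$ with $\xi^2=-1$. A fallback is to reduce to the real cyclotomic case by writing $\theta=\zeta_4^{-1}(\eta+\eta^{-1})$ for a suitable root of unity $\eta$ and comparing fields.
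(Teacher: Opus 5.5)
Your Binet parametrization is correct: it gives $\theta=\xi\mp\xi^{-1}$ for a root of unity $\xi$. Your relative-basis argument is also correct: $\Z[\xi]=\Z[\theta]+\xi\Z[\theta]$ because $\xi^2=\theta\xi\pm1$, and intersecting with $\Q(\theta)$ using $\OO_{\Q(\xi)}=\Z[\xi]$ proves $\OO_{\Q(\theta)}=\Z[\theta]$ whenever $\xi\notin\Q(\theta)$. This is a valid and more direct substitute for the paper's leading-coefficient argument in Proposition~\ref{prop:main}, and it settles $t=-s^2$. For $t=+s^2$, however, the whole content is the claim $\xi\notin\Q(\theta)$, and you do not prove it. The sketch you give for it would also fail, for the following reasons.
\begin{itemize}
\item You write $u^{2n}=1$ for $F_n$ with $n$ odd, but your own formula gives $u^{2n}=(-1)^n=-1$. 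With $u^{2n}=1$ and $n$ odd one gets $4\nmid m$, and then the statement is false.
\item Your claim that $\xi$ and $-\xi^{-1}$ can have the same order when $m$ is odd or $m\equiv2\pmod 4$ is wrong. In those cases the order doubles, respectively halves.
\item The degenerate case is not just $\xi=\pm i$. From $\xi^j-\xi^{-j}-(\xi-\xi^{-1})=(\xi^j-\xi)(1+\xi^{-1-j})$, a nontrivial $\sigma_j:\xi\mapsto\xi^j$ fixes $\theta$ iff $\xi^j=-\xi^{-1}$, which requires $-\xi^{-1}$ to be a primitive $m$-th root of unity, i.e.\ $4\mid m$. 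So $\xi\in\Q(\theta)$ whenever $4\nmid m$. For example, $\xi=\zeta_3$ gives $\theta=i\sqrt3$, a root of $F_6^{(1,1)}=X(X^2+1)(X^2+3)$, and there $\Z[\theta]\neq\OO_{\Q(\sqrt{-3})}$.
\item The condition $\xi^2\neq-1$ needs no parity at all; it is just $a\neq b$.
\end{itemize}

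The missing idea is where parity actually enters. In both $t=+s^2$ rows one has $\xi^{2n}=-1$: for $F_n$ with $n$ odd this is $(-1)^n=-1$, and for $L_n$ with $n$ even it is $-(-1)^n=-1$. Hence the order $m$ of $\xi$ divides $4n$ but not $2n$, which forces $4\mid m$. Then $-\xi^{-1}=\xi^{m/2-1}$, and $m/2-1$ is odd and prime to $m/2$, hence prime to $m$. So $\xi\mapsto-\xi^{-1}$ is an element of $\Gal(\Q(\xi)/\Q)$ that fixes $\theta=\xi-\xi^{-1}$. It is nontrivial because $\xi^2\neq-1$. Therefore $\xi\notin\Q(\theta)$, and your intersection argument goes through. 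This is exactly the content of the paper's Proposition~\ref{prop:critA}, via Lemma~\ref{lem:proper}.
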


The following result, which is the main theorem of~\cite{CGH}, is an immediate
consequence of Theorem~\ref{thm:main}.

\begin{corollary}\label{cor:CGH}
For every odd $n$, all irreducible factors of $F_n$ are monogenic.  For every even $n$, all
irreducible factors of $L_n$ are monogenic.
\end{corollary}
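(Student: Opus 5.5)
The statement to prove is Corollary~\ref{cor:CGH}. The plan is to specialise Theorem~\ref{thm:main} to $(k,t)=(1,1)$, where the recurrences \eqref{eq:ktdef} become exactly those defining $F_n$ and $L_n$. So $F_n=F_n^{(1,1)}$ and $L_n=L_n^{(1,1)}$ for all $n\ge0$, by a trivial induction on $n$.

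With $k=1$ we take $s=1$, so $t=+s^2=1$. The first two rows of the table apply: $F_n^{(1,1)}$ with $n$ odd, and $L_n^{(1,1)}$ with $n$ even, $n\ge2$.

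Let $f$ be an irreducible factor of $F_n$ ($n$ odd) or of $L_n$ ($n\ge2$ even). Let $x$ be any root of $f$. Theorem~\ref{thm:main} gives $\theta=kx/s=x$ and $\OO_{\Q(x)}=\Z[x]$.

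To call $f$ monogenic in the sense of the introduction, $f$ must be monic, which needs a short argument. $F_n$ and $L_n$ are monic for $n\ge1$: by induction, $\deg F_n=n-1$ and $\deg L_n=n$ with leading coefficient $1$, since multiplying by $X$ preserves the leading coefficient and adding the previous term does not affect the top degree. By Gauss's lemma the irreducible factors in $\Z[X]$ can then be taken monic, i.e.\ up to sign. Since $f$ is monic irreducible with root $x$ and $\Z[x]=\OO_{\Q(x)}$, $f$ is monogenic by definition.

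Two edge cases need a sentence each. For even $n=0$, $L_0=2$ has no irreducible factors of positive degree. Constant factors such as $\pm1$ are excluded by convention.

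I expect no real obstacle. The only care points are the identification of parameters ($t=s^2$ with $s=1$) and checking monicity, so that \emph{monogenic polynomial} applies rather than only \emph{monogenic field}.
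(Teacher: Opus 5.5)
Your proposal is correct and is exactly the paper's argument: specialize Theorem~\ref{thm:main} to $k=s=t=1$ (the first two rows of the table), so that $\theta=x$ and $\OO_{\Q(x)}=\Z[x]$. Your extra checks are details the paper leaves implicit: that $F_n$ and $L_n$ are monic, so their irreducible factors are monic up to sign, and that the degenerate cases $L_0=2$ and $F_1=1$ contribute no nonconstant factors.
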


\section{Preliminaries}
\label{sec:engine}

Throughout this section, $m>1$ is an integer, $\zeta$ is a primitive $m$-th root of unity,
$K=\Q(\zeta)$, and $c\in\Z$.  Recall that $\OO_K=\Z[\zeta]$, and $K/\Q$ is Galois with $\Gal(K/\Q)\cong(\Z/m\Z)^\times$.

\begin{proposition}\label{prop:main}
Let $u:=\zeta+c\,\zeta^{-1}$. Suppose that $\Q(u)$ is a proper subfield of $K$, then $\OO_{\Q(u)}=\Z[u]$.  In particular;
$\Q(u)$ is monogenic.
\end{proposition}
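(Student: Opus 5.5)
The plan is to exploit the fact that $\zeta$ is a root of a monic quadratic over $\Z[u]$, so that $\Z[\zeta]$ is generated over $\Z[u]$ by $1$ and $\zeta$, and then to use a Galois automorphism of $K/\Q(u)$ to kill the $\zeta$-component of any integer of $\Q(u)$. I expect the argument to need no explicit discriminant computation; everything reduces to $\OO_K=\Z[\zeta]$.

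\textbf{Step 1: the relative degree is $2$.} Set $L:=\Q(u)$. Multiplying $u=\zeta+c\zeta^{-1}$ by $\zeta$ gives
\[
\zeta^2-u\zeta+c=0,
\]
so $\zeta$ satisfies the monic polynomial $g(X)=X^2-uX+c\in\Z[u][X]$. Hence $[K:L]\le 2$. Since $L\subsetneq K$ by hypothesis, we get $[K:L]=2$ and $g$ is the minimal polynomial of $\zeta$ over $L$. Its other root is $u-\zeta=c\zeta^{-1}$. Let $\sigma$ be the nontrivial element of $\Gal(K/L)$. Then $\sigma(\zeta)=u-\zeta$ and $\sigma$ fixes $\Z[u]$ pointwise.

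\textbf{Step 2: $\Z[\zeta]$ as a $\Z[u]$-module.} From $\zeta^2=u\zeta-c$, induction on the exponent gives $\zeta^j\in\Z[u]+\Z[u]\zeta$ for all $j\ge0$. We also have $u\in\Z[\zeta,\zeta^{-1}]=\Z[\zeta]$, because $\zeta^{-1}=\zeta^{m-1}$. Together these give
\[
\OO_K=\Z[\zeta]=\Z[u]+\Z[u]\,\zeta .
\]

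\textbf{Step 3: descent via $\sigma$.} Take $\alpha\in\OO_L\subseteq\OO_K$ and write $\alpha=a+b\zeta$ with $a,b\in\Z[u]$. Applying $\sigma$, which fixes $\alpha$, $a$ and $b$, gives $\alpha=a+b(u-\zeta)$. Subtracting the two expressions yields
\[
b\,(2\zeta-u)=0 .
\]
Now $2\zeta-u\neq0$: otherwise $\zeta=u/2\in L$, which would force $K=L$ and contradict properness. Hence $b=0$ and $\alpha=a\in\Z[u]$. So $\OO_L\subseteq\Z[u]$. The reverse inclusion holds because $u$ is an algebraic integer, and therefore $\OO_{\Q(u)}=\Z[u]$.

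The only delicate point is making sure the properness hypothesis is used correctly: it gives both $[K:L]=2$, hence the existence of $\sigma$, and the nonvanishing of $2\zeta-u$. No other obstacle is expected. One could additionally observe that properness forces $\zeta\cdot\sigma(\zeta)=c$ to be a root of unity, so $c=\pm1$, but the argument above does not need this.
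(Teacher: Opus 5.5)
Your proof is correct, and it takes a genuinely different route from the paper.

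The paper never identifies the relative extension $K/\Q(u)$. It uses properness only through the inequality $2[\Q(u):\Q]\le\varphi(m)$, and then argues in $\Z$-coordinates with respect to the power basis of $\Z[\zeta]$. It writes $\zeta^{N}\sum_{k\le N}a_ku^k=g(\zeta)$, where $g(X)=\sum_k a_kX^{N-k}(X^2+c)^k$ has degree at most $2N<\varphi(m)$ and top coefficient $a_N$. Comparing $g$ with the integral representative of $\zeta^N\alpha$ of degree $<\varphi(m)$ shows $a_N\in\Z$, and induction on $N$ finishes.

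You use the same quadratic relation $\zeta^2-u\zeta+c=0$ structurally instead. It gives $[K:\Q(u)]=2$ exactly and $\Z[\zeta]=\Z[u]+\Z[u]\zeta$, from which $\OO_{\Q(u)}=\OO_K\cap\Q(u)=\Z[u]$ follows at once. This is shorter and more transparent.

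Your closing remark is a real bonus. Since $c=\zeta\,\sigma(\zeta)$ is a root of unity in $\Z$, we get $c=\pm1$, so the hypothesis can only hold for the two values of $c$ the paper actually uses.

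What the paper's approach buys is an intermediate Claim that does not depend on properness: any algebraic integer $\sum_{k\le N}a_ku^k$ with $a_k\in\Q$ and $2N<\varphi(m)$ has integral coefficients. Properness enters only to bound the degree.

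One small simplification of your Step 3: the automorphism $\sigma$ is not needed. Since $\zeta\notin\Q(u)$, the set $\{1,\zeta\}$ is a $\Q(u)$-basis of $K$. So $\alpha=a+b\zeta\in\Q(u)$ with $a,b\in\Z[u]$ already forces $b=0$; otherwise $\zeta=(\alpha-a)/b\in\Q(u)$.
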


\begin{lemma}\label{lem:shift}
Let $N\ge0$, and let $a_0,\dots,a_N\in\Q$. Define
$$ g(X)=\sum_{k=0}^{N}a_k\,X^{\,N-k}\,(X^2+c)^{k}\ \in\Q[X].$$
Then, $\deg g\le 2N$, the coefficient of $X^{2N}$ in $g$ is $a_N$, and
\[
   g(\zeta)=\zeta^{N}\sum_{k=0}^{N}a_k\,u^{k},\qquad u=\zeta+c\zeta^{-1}.
\]
\end{lemma}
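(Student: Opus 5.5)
This is a direct computation, term by term. Write $g_k(X):=X^{N-k}(X^2+c)^k$, so that $g=\sum_{k=0}^N a_k g_k$; both the degree claim and the evaluation identity are linear in the $a_k$, so it suffices to treat each $g_k$ separately and then sum.

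For the degree and leading coefficient: $g_k$ is monic of degree $(N-k)+2k=N+k$. Hence every $g_k$ has degree at most $2N$, and only $g_N=(X^2+c)^N$ reaches degree $2N$, with leading coefficient $1$. Summing, we get $\deg g\le 2N$, and the coefficient of $X^{2N}$ in $g$ is exactly $a_N$. (If $a_N=0$ this simply says $\deg g<2N$, which is consistent with the statement.)

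For the evaluation, note that $\zeta\neq 0$ and
\[
\zeta^2+c=\zeta\,(\zeta+c\zeta^{-1})=\zeta u .
\]
Therefore
\[
g_k(\zeta)=\zeta^{N-k}(\zeta u)^k=\zeta^{N}u^k .
\]
Multiplying by $a_k$ and summing over $k$ gives $g(\zeta)=\zeta^N\sum_{k=0}^N a_k u^k$, as claimed.

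There is no real obstacle here. The only point needing care is the factorisation $\zeta^2+c=\zeta u$, which uses only that $\zeta$ is invertible. Nothing specific to roots of unity is needed, so the identity holds for any nonzero element in place of $\zeta$.
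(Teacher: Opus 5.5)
Your proof is correct and follows the paper's argument essentially line for line. It counts degree $N+k$ per summand, notes that only $k=N$ reaches $X^{2N}$ (using that $X^2+c$ is monic), and uses the factorization $\zeta^2+c=\zeta u$ to obtain $\zeta^{N-k}(\zeta^2+c)^k=\zeta^N u^k$.
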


\begin{proof}
For each $0\le k\le N$, the $k$-th summand has degree at most $(N-k)+2k=N+k\le 2N$, with equality only for $k=N$.
Since the polynomial $X^2+c$ is monic, it follows that $\deg g\le2N$  and the leading coefficient of $X^{2N}$ is $a_N$. 
Next, since $\zeta u=\zeta^2+c$, we have $\zeta^{N-k}(\zeta^2+c)^k=\zeta^{N-k}\zeta^{k}u^{k}=\zeta^{N}u^{k}$.
Therefore,
\[
g(\zeta)
=\sum_{k=0}^Na_k\zeta^{N-k}(\zeta^2+c)^k
=\zeta^N\sum_{k=0}^Na_ku^k.
\]

\end{proof}

\begin{lemma}\label{lem:degree}
If $\Q(u)$ is a proper subfield of $K$, then
$2[\Q(u):\Q]\le\varphi(m).$
\end{lemma}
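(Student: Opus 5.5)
We need to show: if $\Q(u)\subsetneq K$ then $2[\Q(u):\Q]\le\varphi(m)$. The plan is to use the tower $\Q\subseteq\Q(u)\subseteq K$ and show that $K$ has degree at most $2$ over $\Q(u)$. Since $\zeta u=\zeta^2+c$, the element $\zeta$ is a root of
\[
X^2-uX+c\in\Q(u)[X].
\]
Therefore $[K:\Q(u)]=[\Q(u)(\zeta):\Q(u)]\le 2$.

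By hypothesis $\Q(u)\ne K$, so $[K:\Q(u)]\ge 2$, and hence $[K:\Q(u)]=2$. The tower law then gives
\[
\varphi(m)=[K:\Q]=[K:\Q(u)]\,[\Q(u):\Q]=2[\Q(u):\Q].
\]
This is in fact an equality, which in particular yields the claimed inequality.

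No step here is a real obstacle. The only point to verify is that $K=\Q(u)(\zeta)$, and this holds because $u\in K$ and $K=\Q(\zeta)$. We also use $[K:\Q]=\varphi(m)$, the irreducibility of $\Phi_m$, which is recalled at the start of the section.
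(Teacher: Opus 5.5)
Your proof is correct, but it takes a slightly different route from the paper. The paper uses only a lower bound on the index. Since $\Q(u)\subsetneq K$, the tower law makes $[K:\Q(u)]=\varphi(m)/d$ an integer greater than $1$, hence at least $2$. That is all the stated inequality needs, and the argument uses nothing about $u$, so it holds for any proper subfield of $K$.

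You also prove the upper bound $[K:\Q(u)]\le 2$, from the relation $\zeta^2-u\zeta+c=0$ (the same identity $\zeta u=\zeta^2+c$ behind Lemma~\ref{lem:shift}). This bound is not needed for the inequality, but it gives the sharper conclusion $\varphi(m)=2[\Q(u):\Q]$: whenever $\Q(u)$ is proper, its index in $K$ is exactly $2$.

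That extra information is useful elsewhere in the paper. It justifies the later claim that $\Q(\zeta+\zeta^{-1})$ has index $2$ in $\Q(\zeta)$, since Lemma~\ref{lem:proper} by itself only shows that this subfield is proper.
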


\begin{proof}
Write $d=[\Q(u):\Q]$. By the Tower Law,
 $[K:\Q]=[K:\Q(u)][\Q(u):\Q]=\varphi(m)$. Hence $d\mid\varphi(m)$. Since $\Q(u)\subsetneq K$, we have $d<\varphi(m)$, and hence $\varphi(m)/d\ge2$.
\end{proof}

\begin{proof}[Proof of Proposition~\ref{prop:main}]
It is enough to prove that $\OO_{\Q(u)}\subseteq\Z[u]$. Let
$d=[\Q(u):\Q]$.

We first prove the following claim.

\medskip

\noindent
{Claim.}
If $a_0,\ldots,a_N\in\Q$ with $2N<\varphi(m)$ and $\alpha=\sum_{k=0}^{N}a_ku^k$
is an algebraic integer, then $a_0,\ldots,a_N\in\Z$.

\medskip

We prove by induction on $N$. The case $N=0$ is trivial. It suffices to prove that
$a_N\in\Z$, since $\alpha-a_Nu^N$ is again an algebraic integer of the same form. Let $g$ be the polynomial defined in Lemma~\ref{lem:shift}. Then
$g(\zeta)=\zeta^N\alpha,~
\deg g\le2N,$
and the leading coefficient of $g$ is $a_N$.
Since $\zeta^N\alpha\in\OO_K=\Z[\zeta]$, there exists
$f\in\Z[X]$ with $\deg f<\varphi(m)$ and
$f(\zeta)=\zeta^N\alpha$, after replacing $f$ by its remainder modulo $\Phi_m$, if necessary.
Hence $(g-f)(\zeta)=0.$
As $\deg(g-f)<\varphi(m)=\deg\Phi_m,$
we obtain $g=f$.  Hence
$a_N\in\Z$, proving the claim.

Now let $\alpha\in\OO_{\Q(u)}$. Since
$\{1,u,\ldots,u^{d-1}\}$ is a $\Q$-basis of $\Q(u)$, we may write
\[
\alpha=\sum_{k=0}^{d-1}a_ku^k,
\qquad a_k\in\Q.
\]
By Lemma~\ref{lem:degree}, $2(d-1)<\varphi(m).$
Hence the claim applies and yields $a_k\in\Z$ for every $0\le k\le d-1$. Thus
$\alpha\in\Z[u]$, and the proof is complete.
\end{proof}

\begin{lemma}\label{lem:proper}
The field $\Q(\zeta+c\zeta^{-1})$ is a proper subfield of $\Q(\zeta)$ if and only if there exists a primitive $m$-th root of unity $\beta\ne\zeta$ such that $ \beta+c\beta^{-1}=\zeta+c\zeta^{-1}.$
\end{lemma}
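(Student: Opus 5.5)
The plan is to use Galois theory of $K=\Q(\zeta)$ over $\Q$ together with the basic correspondence between subfields and subgroups. Set $u=\zeta+c\zeta^{-1}$ and $H=\Gal(K/\Q(u))$. Then $\Q(u)$ is a proper subfield of $K$ if and only if $H$ is nontrivial, i.e.\ if and only if there is some $\sigma\in\Gal(K/\Q)$ with $\sigma\neq\mathrm{id}$ and $\sigma(u)=u$.

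First I will translate automorphisms into roots of unity. Every $\sigma\in\Gal(K/\Q)$ has the form $\sigma_a:\zeta\mapsto\zeta^a$ with $a\in(\Z/m\Z)^\times$. Since $c\in\Z$ is fixed by $\sigma_a$,
\[
\sigma_a(u)=\zeta^a+c\,\zeta^{-a}.
\]
So with $\beta:=\sigma_a(\zeta)=\zeta^a$, which is again a primitive $m$-th root of unity, the condition $\sigma_a(u)=u$ reads $\beta+c\beta^{-1}=\zeta+c\zeta^{-1}$. Moreover, $\sigma_a\neq\mathrm{id}$ exactly when $\zeta^a\neq\zeta$, i.e.\ when $\beta\neq\zeta$.

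The forward direction follows at once. If $\Q(u)\subsetneq K$, pick a nontrivial $\sigma\in H$ and set $\beta=\sigma(\zeta)$. Then $\beta$ is a primitive $m$-th root of unity, $\beta\neq\zeta$, and $\beta+c\beta^{-1}=u$.

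For the converse, suppose $\beta\neq\zeta$ is a primitive $m$-th root of unity with $\beta+c\beta^{-1}=u$. Write $\beta=\zeta^a$ with $\gcd(a,m)=1$; this is possible because the primitive $m$-th roots of unity are exactly the $\zeta^a$ with $a$ coprime to $m$. Then $\sigma_a$ is a nontrivial element of $\Gal(K/\Q)$ fixing $u$, hence fixing $\Q(u)$. By the Galois correspondence, $[K:\Q(u)]=|H|\ge2$, so $\Q(u)$ is proper.

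There is no real obstacle here. The only points to state carefully are that $c$ is rational, so it is fixed by $\sigma_a$, and that every primitive $m$-th root of unity is a Galois conjugate of $\zeta$. The latter holds because $\Phi_m$ is irreducible, equivalently because $\Gal(K/\Q)\cong(\Z/m\Z)^\times$, as recalled at the start of this section.
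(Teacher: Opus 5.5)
Your proposal is correct and follows essentially the same Galois-theoretic argument as the paper. You take $\beta=\sigma(\zeta)$ for a nontrivial $\sigma$ fixing $\Q(u)$ in one direction. In the other, you realize $\beta$ as $\sigma_a(\zeta)$ and conclude properness from $[K:\Q(u)]=|H|\ge 2$, where the paper instead argues that an automorphism fixing a generator of $K$ must be the identity; the two are equivalent.
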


\begin{proof}
Suppose that such $\beta$ exists.
Since $\beta$ and $\zeta$ are roots of $\Phi_m$, there exists
$\sigma\in\Gal(K/\Q)$ with $\sigma(\zeta)=\beta$. Hence
$\sigma(u)=u$. If $\Q(u)=K$, then $\sigma$ fixes a generator of $K$ and therefore
$\sigma=\mathrm{id}$, contradicting the fact that $\beta\ne\zeta$. 

Conversely, suppose that $\Q(u)\subsetneq K$. Since $K/\Q$ is Galois, there exists a
nontrivial automorphism $\sigma\in\Gal(K/\Q)$ which fixes $\Q(u)$ pointwise. Putting
$\beta=\sigma(\zeta)$, we have $\beta\ne\zeta$, $\beta$ is again a primitive $m$-th root
of unity, and $\beta+c\beta^{-1}
=\sigma(\zeta+c\zeta^{-1})
=\sigma(u)
=u.$ This completes the proof.

\end{proof}

\begin{proposition}\label{prop:critA}
Let $N\ge1$ and let $\alpha\in\C$ satisfy $\alpha^{2N}=-1$ and $\alpha^2\neq-1$.  Then the field $\Q(\alpha-\alpha^{-1})$ is monogenic. Precisely,
$\OO_{\Q(\alpha-\alpha^{-1})}=\Z[\alpha-\alpha^{-1}]$.
\end{proposition}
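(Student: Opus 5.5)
We plan to reduce the statement to Proposition~\ref{prop:main} with $c=-1$. Let $m$ be the order of $\alpha$ in $\C^\times$. Since $\alpha^{2N}=-1$, we have $\alpha^{4N}=1$ but $\alpha^{2N}\neq 1$, so $m\mid 4N$ and $m\nmid 2N$. Hence the $2$-adic valuation of $m$ exceeds that of $2N$, so $4\mid m$; in particular $m>1$. Put $\zeta:=\alpha$, which is a primitive $m$-th root of unity, and $u:=\zeta-\zeta^{-1}$. By Proposition~\ref{prop:main} with $c=-1$, it suffices to show that $\Q(u)$ is a proper subfield of $K=\Q(\zeta)$.

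For this we apply Lemma~\ref{lem:proper}: we need a primitive $m$-th root of unity $\beta\neq\zeta$ with $\beta-\beta^{-1}=\zeta-\zeta^{-1}$. The natural candidate is $\beta:=-\zeta^{-1}$. Indeed,
\[
\beta-\beta^{-1}=-\zeta^{-1}+\zeta=u .
\]
We must check two things. First, $\beta$ is a primitive $m$-th root of unity. Since $4\mid m$, we have $-1=\zeta^{m/2}$, so $\beta=\zeta^{m/2-1}$. Because $m/2$ is even, $m/2-1$ is odd, hence coprime to $m$ (a power of $2$ divides $m$ and any odd prime $p\mid m$ divides $m/2$, hence does not divide $m/2-1$). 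Alternatively, $\beta=-\zeta^{-1}$ has order $m$ because $-1\in\langle\zeta\rangle$. Second, $\beta\neq\zeta$. The equality $-\zeta^{-1}=\zeta$ is equivalent to $\zeta^2=-1$, which is excluded by hypothesis.

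Lemma~\ref{lem:proper} then gives $\Q(u)\subsetneq K$, and Proposition~\ref{prop:main} gives $\OO_{\Q(u)}=\Z[u]$, as required. The only delicate point is showing that $-1$ is a power of $\zeta$, i.e.\ that $4\mid m$ (more precisely, that $m$ is even), and this follows from the valuation argument above. The hypothesis $\alpha^2\neq-1$ is used exactly to guarantee $\beta\neq\zeta$. For example, when $m=4$ we get $\beta=\zeta$, and in that case $u=\pm 2i$ generates all of $K$, with $\Z[2i]\neq\Z[i]$.
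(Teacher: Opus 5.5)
Your proof is correct and is essentially the paper's: the same $\beta=-\alpha^{-1}$, Lemma~\ref{lem:proper} with $c=-1$, then Proposition~\ref{prop:main}; the only difference is that you first prove $4\mid m$ and write $\beta=\zeta^{m/2-1}$ as in Corollary~\ref{cor:nakahara}, whereas the paper writes $\beta=\alpha^{2N-1}$ and checks $\gcd(2N-1,m)=1$ directly. One side remark is wrong and should be dropped: $-1\in\langle\zeta\rangle$ (i.e.\ $m$ even) does not by itself make $-\zeta^{-1}$ primitive (for $m=6$, $-\zeta^{-1}=\zeta^{2}$ has order $3$), so your ``alternatively'' justification and the parenthetical ``more precisely, that $m$ is even'' fail, since $4\mid m$ is exactly what is needed.
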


\begin{proof}
By hypothesis, we see that $\alpha$ is a root of unity. Let $m$ be its order. So,
$m\mid 4N$.  Set $\beta:=-\alpha^{-1}$.  Since $\alpha^{2N}=-1,$ we have
$\beta=\alpha^{2N-1}$.  Let $\gcd(2N-1,m)=e$. Then $e$ is odd. Hence
$\gcd(e,4)=1$ and $e\mid N$. This implies that $e$ divides both $2N$ and $2N-1$, and thus $e=1$.  Therefore,
$\beta$ is again a primitive $m$-th root of unity.  Moreover $\beta\neq\alpha$, 
because $\alpha^2\neq-1$ by hypothesis, and $\beta-\beta^{-1}=-\alpha^{-1}+\alpha$.  Keeping in mind Lemma~\ref{lem:proper}
with $c=-1$, we have $\Q(\alpha-\alpha^{-1})\neq\Q(\alpha)$. 
The proof is complete by Proposition~\ref{prop:main}.
\end{proof}

Recall that for $m>2$ the field $\Q(\zeta+\zeta^{-1})$ is the {maximal real subfield} of
$\Q(\zeta)$. It is fixed by complex conjugation $\zeta\mapsto\zeta^{-1}$, and hence it is contained in
$\R$, and as index $2$ in $\Q(\zeta)$ by Lemma~\ref{lem:proper}.

The following result proved in {\cite[Prop.~2.16]{Wash}} is an immediate consequence of Lemm~\ref{lem:proper} and Proposition~\ref{prop:main}.
\begin{corollary}\label{cor:washington}
For any integer $m>2$, the maximal real subfield
$\Q(\zeta+\zeta^{-1})$ is monogenic, with
$\OO_{\Q(\zeta+\zeta^{-1})}
=
\Z[\zeta+\zeta^{-1}].$
\end{corollary}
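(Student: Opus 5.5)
The statement is Corollary~\ref{cor:washington}. The plan is to derive it from Lemma~\ref{lem:proper} and Proposition~\ref{prop:main} with $c=1$. Fix $m>2$, let $\zeta$ be a primitive $m$-th root of unity, and set $u=\zeta+\zeta^{-1}$.

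First we check that $\Q(u)$ is a proper subfield of $K=\Q(\zeta)$. We take $\beta:=\zeta^{-1}$. This $\beta$ is again a primitive $m$-th root of unity, since inversion preserves order, and it satisfies
\[
\beta+\beta^{-1}=\zeta^{-1}+\zeta=u .
\]
It remains to see that $\beta\neq\zeta$. The equality $\zeta^{-1}=\zeta$ would mean $\zeta^2=1$, so the order $m$ of $\zeta$ would divide $2$, contradicting $m>2$. Lemma~\ref{lem:proper} with $c=1$ therefore gives $\Q(u)\subsetneq K$.

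Proposition~\ref{prop:main} with $c=1$ then applies directly and yields $\OO_{\Q(u)}=\Z[u]$. In particular $\Q(\zeta+\zeta^{-1})$ is monogenic, which proves the corollary.

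There is essentially no obstacle here. The only point requiring care is that the hypothesis $m>2$ is exactly what guarantees $\zeta^{-1}\neq\zeta$. As an optional remark, one can also observe that $[K:\Q(u)]=2$, because $\zeta$ is a root of $X^2-uX+1\in\Q(u)[X]$; however, the proof does not need this.
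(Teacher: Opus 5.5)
Your proposal is correct and follows the paper's proof: take $c=1$ and $\beta=\zeta^{-1}$, use $m>2$ to get $\beta\neq\zeta$, apply Lemma~\ref{lem:proper} to get $\Q(\zeta+\zeta^{-1})\subsetneq\Q(\zeta)$, and conclude with Proposition~\ref{prop:main}. You also spell out why $\beta$ is primitive and why $\zeta^{-1}=\zeta$ would force $m\mid 2$, details the paper leaves implicit.
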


\begin{proof}
Take $c=1$ and $\beta=\zeta^{-1}$. Since $m>2$, we have
$\beta\ne\zeta$, and
$\beta+\beta^{-1}=\zeta+\zeta^{-1}.$
Hence the hypothesis of Lemma~\ref{lem:proper} is satisfied, and the conclusion follows immediately from Proposition~\ref{prop:main}.
\end{proof}
The following result is a quick application of Corollary~\ref{cor:washington}.
\begin{proposition}\label{prop:critB}
Let $\alpha\in\C$ be a root of unity with $\alpha^2\neq1$.  Then
$\OO_{\Q(\alpha+\alpha^{-1})}=\Z[\alpha+\alpha^{-1}]$.
\end{proposition}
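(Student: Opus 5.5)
The plan is to reduce directly to Corollary~\ref{cor:washington}. Since $\alpha$ is a root of unity, let $m$ be its order. The hypothesis $\alpha^2\neq 1$ says exactly that $\alpha\neq\pm1$, that is, $m>2$. So $\alpha$ is a primitive $m$-th root of unity with $m>2$, and we may take $\zeta=\alpha$ in the setup of Section~\ref{sec:engine}.

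Corollary~\ref{cor:washington} is stated for the field $\Q(\zeta+\zeta^{-1})$ with $\zeta$ a primitive $m$-th root of unity. It applies verbatim to any primitive $m$-th root of unity, not only to $e^{2\pi i/m}$. This is because the preliminaries only assume $\zeta$ is primitive of order $m$; Lemma~\ref{lem:proper} and Proposition~\ref{prop:main} use nothing beyond that. Concretely, with $c=1$ and $\beta=\alpha^{-1}$, we have $\beta\neq\alpha$ (since $\alpha^2\neq1$), $\beta$ is again a primitive $m$-th root of unity, and $\beta+\beta^{-1}=\alpha+\alpha^{-1}$.

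Lemma~\ref{lem:proper} then gives that $\Q(\alpha+\alpha^{-1})\subsetneq\Q(\alpha)$. Proposition~\ref{prop:main} then yields $\OO_{\Q(\alpha+\alpha^{-1})}=\Z[\alpha+\alpha^{-1}]$.

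The only point needing care is to confirm that the order $m$ exceeds $2$, so that the corollary's hypothesis holds. No step is a genuine obstacle. If one preferred to cite the corollary only for $\zeta_m=e^{2\pi i/m}$, one could instead note that $\alpha=\zeta_m^j$ with $\gcd(j,m)=1$. The automorphism $\zeta_m\mapsto\zeta_m^j$ of $\Q(\zeta_m)$ carries $\zeta_m+\zeta_m^{-1}$ to $\alpha+\alpha^{-1}$, and it maps $\Z[\zeta_m+\zeta_m^{-1}]$ isomorphically onto $\Z[\alpha+\alpha^{-1}]$ and rings of integers onto rings of integers. So the monogenity statement transfers.
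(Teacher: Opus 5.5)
Your proposal is correct and matches the paper's approach. The paper gives no separate proof and calls the result "a quick application of Corollary~\ref{cor:washington}": $\alpha$ has order $m>2$ and is a primitive $m$-th root of unity, so the corollary (i.e.\ Lemma~\ref{lem:proper} with $c=1$, $\beta=\alpha^{-1}$, followed by Proposition~\ref{prop:main}) applies with $\zeta=\alpha$.
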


The following Corollary proved in {\cite[Prop.~3.2]{CGH}};  follows from Lemma~\ref{lem:proper} and  Proposition~\ref{prop:main}. It extends \cite{NS}, where the cases $m=2^r\ge8$ and $m=4p^r$
with $p$ an odd prime are treated, for all $m>4$ and divisible by $4$. 
\begin{corollary}[]\label{cor:nakahara}
If $m>4$ and $4$ divides $ m$, then the field
$\Q(\zeta-\zeta^{-1})$ is monogenic with $\OO_{\Q(\zeta-\zeta^{-1})}
=
\Z[\zeta-\zeta^{-1}].$
\end{corollary}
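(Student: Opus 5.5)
We prove Corollary~\ref{cor:nakahara} the same way as Corollary~\ref{cor:washington}: apply Lemma~\ref{lem:proper} with $c=-1$, and then conclude with Proposition~\ref{prop:main}. The task is to exhibit a primitive $m$-th root of unity $\beta\neq\zeta$ with $\beta-\beta^{-1}=\zeta-\zeta^{-1}$.

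Since $4\mid m$, the element $i=\zeta^{m/4}$ satisfies $\zeta^{m/2}=-1$. The natural candidate is $\beta:=-\zeta^{-1}=\zeta^{m/2-1}$. Then
\[
\beta-\beta^{-1}=-\zeta^{-1}+\zeta=\zeta-\zeta^{-1},
\]
which is the required identity.

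Next we check that $\beta$ is a primitive $m$-th root of unity. This amounts to $\gcd(m/2-1,m)=1$. Because $4\mid m$, the integer $m/2$ is even, so $m/2-1$ is odd. Any common divisor $e$ of $m/2-1$ and $m$ is therefore odd, so $e\mid m/2$ as well, and hence $e\mid 1$. This is the same parity argument used in the proof of Proposition~\ref{prop:critA}.

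It remains to show $\beta\neq\zeta$. Equality would mean $-\zeta^{-1}=\zeta$, that is, $\zeta^2=-1$, which forces $\zeta$ to have order $4$. This contradicts $m>4$, so $\beta\neq\zeta$.

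With these three facts, Lemma~\ref{lem:proper} shows that $\Q(\zeta-\zeta^{-1})$ is a proper subfield of $K$. Proposition~\ref{prop:main} with $c=-1$ then gives $\OO_{\Q(\zeta-\zeta^{-1})}=\Z[\zeta-\zeta^{-1}]$.

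Alternatively, the corollary is a direct instance of Proposition~\ref{prop:critA}. Take $\alpha=\zeta$ and $2N=m/2$, which is possible since $4\mid m$. Then $\alpha^{2N}=-1$, and $\alpha^2\neq-1$ because $m>4$. There is no real obstacle in either route; the only point needing care is the primitivity of $\beta$, which is where $4\mid m$ is used.
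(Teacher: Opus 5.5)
Your proof is correct and matches the paper's argument: take $c=-1$ and $\beta=-\zeta^{-1}=\zeta^{m/2-1}$, get primitivity of $\beta$ from the oddness of $m/2-1$ and $\beta\neq\zeta$ from $m>4$, then apply Lemma~\ref{lem:proper} and Proposition~\ref{prop:main}. Your alternative route through Proposition~\ref{prop:critA} with $\alpha=\zeta$ and $N=m/4$ is also valid.
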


\begin{proof}
Take $c=-1$ and $\beta=-\zeta^{-1}$.  Since $4$ divides $ m$, we have $\zeta^{m/2}=-1$ and
$\beta=\zeta^{m/2-1}$. Keeping in mind the fact that $\frac{m}{2}-1$ is odd, it is easy to see that $\gcd\!\left(\frac{m}{2}-1,m\right)=1.$ Thus
$\beta$ is a primitive $m$-th root of unity. Also,
$\beta-\beta^{-1}=-\zeta^{-1}+\zeta$, and $\beta\neq\zeta$. So the hypothesis of Lemma~\ref{lem:proper} is satisfied. Now, applying Proposition~\ref{prop:main}, we are done.
\end{proof}

\begin{remark}
Let $n>1$ be odd and let $\alpha=i\zeta_n$. Then $\alpha$ is a primitive
$4n$-th root of unity and
$\alpha-\alpha^{-1}
=
-i(\zeta_{2n}+\zeta_{2n}^{-1}).$
Using Corollary~\ref{cor:nakahara}, we get
\cite[Proposition~4.1]{CGH}.
\end{remark}

\section{Dickson polynomials and the \texorpdfstring{$(k,t)$}{(k,t)}-families}
\label{sec:dickson}

\begin{definition}\label{def:dickson}
Let $R$ be a commutative ring and let $a\in R$.  The {Dickson polynomials of the first and
second kind with parameter $a$} to be denoted by $D^{(1)}_n(X;a),\,D^{(2)}_n(X;a)\in R[X]$ are the sequences defined recursively by
\begin{equation}\label{eq:dickson}
\begin{aligned}
  D^{(1)}_0(X;a)&=2, & D^{(1)}_1(X;a)&=X, &
  D^{(1)}_{n+2}(X;a)&=X\,D^{(1)}_{n+1}(X;a)-a\,D^{(1)}_{n}(X;a),\\
  D^{(2)}_0(X;a)&=1, & D^{(2)}_1(X;a)&=X, &
  D^{(2)}_{n+2}(X;a)&=X\,D^{(2)}_{n+1}(X;a)-a\,D^{(2)}_{n}(X;a).
\end{aligned}
\end{equation}
For any $x\in R$ we will write $D^{(j)}_n(x;a)\in R$ for the value of this polynomial at $x$, and we
abbreviate $D^{(j)}_n$ when $a$ is clear from the context.  Note that both are monic of degree $n$ for $n\ge1$ (see \cite{LMT}).
\end{definition}


\begin{lemma}\label{lem:kt-is-dickson}
For $n\ge0$ and $k,t\in\mathbb Z$,
$$ F^{(k,t)}_{n+1}(X)=D^{(2)}_n(kX;-t),\qquad L^{(k,t)}_{n}(X)=D^{(1)}_n(kX;-t).$$
\end{lemma}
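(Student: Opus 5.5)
This is a direct induction on $n$: both sides of each identity satisfy the same linear recurrence with the same two initial values, so they agree for all $n\ge0$. The only thing to watch is the index shift in the Fibonacci case and the sign of the parameter $a=-t$.

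\textbf{Recurrence.} Substitute $X\mapsto kX$ and $a=-t$ into \eqref{eq:dickson}. Writing $P_n(X):=D^{(j)}_n(kX;-t)$, we get
\[
P_{n+2}(X)=kX\,P_{n+1}(X)+t\,P_n(X),
\]
which is exactly the recurrence in \eqref{eq:ktdef}. Substitution commutes with the ring operations, since $X\mapsto kX$ is a ring homomorphism $\Z[X]\to\Z[X]$.

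\textbf{Lucas case.} Take $G_n:=L^{(k,t)}_n(X)$ and $H_n:=D^{(1)}_n(kX;-t)$. Both satisfy the recurrence above. The initial values match:
\[
H_0=2=L^{(k,t)}_0,\qquad H_1=kX=L^{(k,t)}_1 .
\]
Strong induction on $n$ then gives $G_n=H_n$ for all $n\ge0$.

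\textbf{Fibonacci case.} Take $G_n:=F^{(k,t)}_{n+1}(X)$ and $H_n:=D^{(2)}_n(kX;-t)$. The shifted sequence $G_n$ still satisfies $G_{n+2}=kXG_{n+1}+tG_n$, because this is the defining recurrence of \eqref{eq:ktdef} at index $n+1$. The initial values match:
\[
H_0=1=F^{(k,t)}_1,\qquad H_1=kX=kX\cdot F^{(k,t)}_1+t\cdot F^{(k,t)}_0=F^{(k,t)}_2 .
\]
The same induction finishes the proof.

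No step is a real obstacle. The one point that needs care is checking that the $D^{(2)}$ initial values line up with the shifted Fibonacci index: $F_2^{(k,t)}=kX$ must be computed from the recurrence, since it is not given directly.
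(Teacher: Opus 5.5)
Your proof is correct. The paper states this lemma without proof, and your induction is exactly the routine verification it leaves implicit, in the same style as its inductive proofs of Lemmas~\ref{lem:binet} and~\ref{lem:rescale}: substituting $X\mapsto kX$, $a=-t$ into \eqref{eq:dickson} gives the recurrence $P_{n+2}=kXP_{n+1}+tP_n$, and the initial values match, including $F^{(k,t)}_2=kX$ for the shifted Fibonacci index.
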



\begin{lemma}\label{lem:binet}
Let $R$ be a commutative ring and $x,\alpha,\beta,a $ belonging to $R$ satisfies $\alpha+\beta=x$ and
$\alpha\beta=a$.  Then for every $n\ge0$, 
\[
   D^{(1)}_n(x;a)=\alpha^n+\beta^n,\qquad
   (\alpha-\beta)\,D^{(2)}_n(x;a)=\alpha^{n+1}-\beta^{n+1}.
\]
If $\alpha=\beta$, so that $x=2\alpha$ and $a=\alpha^2$, then
\[
   D^{(1)}_n(x;a)=2\alpha^{n},\qquad
   \alpha\,D^{(2)}_n(x;a)=(n+1)\,\alpha^{n+1}.
\]
\end{lemma}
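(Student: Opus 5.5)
We prove both identities simultaneously by strong induction on $n$, using only that $\alpha,\beta$ are the two roots of $Y^2-xY+a$. The key observation is that for every $j\ge0$,
\[
\alpha^{j+2}=x\,\alpha^{j+1}-a\,\alpha^{j},\qquad \beta^{j+2}=x\,\beta^{j+1}-a\,\beta^{j}.
\]
This holds because $\alpha^2=(\alpha+\beta)\alpha-\alpha\beta=x\alpha-a$, and symmetrically for $\beta$. Multiplying by $\alpha^j$ (resp.\ $\beta^j$) gives the displayed relation. Consequently, every sequence of the form $s_n=P\alpha^n+Q\beta^n$ with $P,Q\in R$ satisfies the same second-order recurrence $s_{n+2}=x s_{n+1}-a s_n$ as the Dickson polynomials in \eqref{eq:dickson}.

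\textbf{First kind.} Take $s_n=\alpha^n+\beta^n$. Then $s_0=2=D^{(1)}_0$ and $s_1=\alpha+\beta=x=D^{(1)}_1(x;a)$. Since both sequences satisfy the same recurrence, induction on $n$ gives $D^{(1)}_n(x;a)=\alpha^n+\beta^n$ for all $n$.

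\textbf{Second kind.} Take $s_n=\alpha^{n+1}-\beta^{n+1}$ and compare it with $(\alpha-\beta)D^{(2)}_n(x;a)$; both satisfy the recurrence. For the initial values:
\begin{itemize}
\item at $n=0$, both equal $\alpha-\beta$;
\item at $n=1$, we have $(\alpha-\beta)x=(\alpha-\beta)(\alpha+\beta)=\alpha^2-\beta^2$.
\end{itemize}
Induction then gives the identity. No division is needed, so this works in any commutative ring. This also covers the case $\alpha=\beta$, where both sides are $0$, which is why the degenerate case needs a separate formula.

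\textbf{Degenerate case $\alpha=\beta$.} Here $x=2\alpha$ and $a=\alpha^2$. The first-kind formula specialises directly to $2\alpha^n$. For the second kind, set $s_n=(n+1)\alpha^{n+1}$ and compare it with $\alpha D^{(2)}_n$, which satisfies the same recurrence. We check
\[
x s_{n+1}-a s_n=2\alpha(n+2)\alpha^{n+2}-\alpha^2(n+1)\alpha^{n+1}=(n+3)\alpha^{n+3}=s_{n+2}.
\]
The initial values agree: $\alpha\cdot1=\alpha=s_0$ and $\alpha\cdot x=2\alpha^2=s_1$. Induction concludes.

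There is no real obstacle in this argument. The only point needing care is working with multiplied-through forms such as $(\alpha-\beta)D^{(2)}_n$ and $\alpha D^{(2)}_n$ rather than dividing, since $R$ is an arbitrary commutative ring.
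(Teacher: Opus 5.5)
Your proof is correct and follows the same route as the paper. The paper's proof simply asserts the result by induction on $n$ from the Dickson recurrence and the relations $\alpha+\beta=x$, $\alpha\beta=a$. You spell out that argument in full, including the check that $\alpha$ and $\beta$ satisfy the recurrence, the two initial values, and the separate induction for the degenerate case $\alpha=\beta$.
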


\begin{proof}
Keeping in mind the defining recurrence of the Dickson polynomials and the
hypothesis, the proof follows by induction on $n$.
\end{proof}

\begin{lemma}\label{lem:rescale}
Let $R$ be a commutative ring and let $a,c\in R$. Then, for every $n\ge0$ and
$j\in\{1,2\}$, $ D^{(j)}_n\bigl(cX;\;c^2a\bigr)=c^{\,n}\,D^{(j)}_n(X;a).$
In particular, if $c$ is a unit in $R$ and $x$ is a root of
$D_n^{(j)}(X;c^2a)$, then $x/c$ is a root of $D_n^{(j)}(X;a)$.
\end{lemma}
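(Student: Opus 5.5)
The statement to prove is Lemma~\ref{lem:rescale}: $D^{(j)}_n(cX;c^2a)=c^nD^{(j)}_n(X;a)$ for $j\in\{1,2\}$, plus the consequence for roots. I would argue by strong induction on $n$, treating both kinds at once since they share the recurrence and differ only in the initial value at $n=0$. Set $P_n(X):=D^{(j)}_n(cX;c^2a)$ and $Q_n(X):=c^nD^{(j)}_n(X;a)$. I would check that both sequences satisfy the same second-order recurrence and agree at $n=0,1$.

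For the base cases, $P_0=D^{(j)}_0$ is the constant $2$ (first kind) or $1$ (second kind), and so is $Q_0=c^0D^{(j)}_0(X;a)$. Likewise $P_1=cX=c\cdot X=Q_1$.

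For the recurrence, substituting $cX$ for the variable and $c^2a$ for the parameter in \eqref{eq:dickson} gives
\[
P_{n+2}=cX\,P_{n+1}-c^2a\,P_n .
\]
For $Q$ we compute
\[
Q_{n+2}=c^{n+2}\bigl(X D^{(j)}_{n+1}(X;a)-aD^{(j)}_n(X;a)\bigr)=cX\,Q_{n+1}-c^2a\,Q_n .
\]
So $P$ and $Q$ satisfy the same recurrence with the same initial values, and induction gives $P_n=Q_n$ for all $n\ge0$. The identity holds in $R[X]$, so it also holds after evaluating at any element of $R$.

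For the ``in particular'' part, suppose $c$ is a unit and $D^{(j)}_n(x;c^2a)=0$. Evaluating the identity at $X=x/c$ gives
\[
c^nD^{(j)}_n(x/c;a)=D^{(j)}_n(x;c^2a)=0.
\]
Since $c^n$ is a unit, we get $D^{(j)}_n(x/c;a)=0$.

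There is no real obstacle. The only care needed is the bookkeeping of the powers of $c$ in the recurrence step, which is where the factor $c^2$ attached to $a$ is used.
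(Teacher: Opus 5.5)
Your proof is correct and follows the same route as the paper: both sides satisfy the recurrence $Y_{n+2}=cX\,Y_{n+1}-c^2a\,Y_n$ with matching initial values, so induction on $n$ gives the identity, and the root statement follows by evaluating at $X=x/c$ and cancelling the unit $c^n$. You also spell out the induction details that the paper's one-line proof leaves implicit.
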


\begin{proof}
The result follows by induction on $n$ and the final assertion follows by evaluating the identity at $X=x/c$. 
\end{proof}



\noindent \textbf{Remark. }Fix $a\in\Z$ and $x\in\C$.  The {characteristic polynomial} of the recurrence
\eqref{eq:dickson} at the point $x$ is given by
\[
   \chi_{x,a}(Y):=Y^2-xY+a\ \in\C[Y].
\]
 Let $\alpha,\beta\in\C$ be its two roots. Then, the relations
 $\alpha+\beta=x,\qquad \alpha\beta=a,$
 tpgether with Lemma~\ref{lem:binet} gives the corresponding Dickson polynomials
 in terms of $\alpha$ and $\beta$.

\begin{lemma}\label{lem:nondeg} Let $a\ne0$ and let $x$ be a root of either
 $D_m^{(1)}(\,\cdot\,;a)$ or  $D_m^{(2)}(\,\cdot\,;a)$. Then the roots
$\alpha$ and $\beta$ of $\chi_{x,a}$ are distinct.
\end{lemma}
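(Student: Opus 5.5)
We argue by contradiction, using the degenerate case of Lemma~\ref{lem:binet}. Suppose $\alpha=\beta$. Then the discriminant $x^2-4a$ of $\chi_{x,a}$ vanishes, so $x=2\alpha$ and $a=\alpha^2$. Since $a\neq0$, we have $\alpha\neq0$.

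Lemma~\ref{lem:binet} gives the values of the Dickson polynomials at $x$ in this case. We then use the hypothesis that $x$ is a root of one of them.

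\emph{First kind.} If $D^{(1)}_m(x;a)=0$, then $2\alpha^m=0$. Working in $\C$, where $2$ is invertible, this forces $\alpha=0$, which contradicts $a=\alpha^2\neq0$. The case $m=0$ cannot occur at all, since $D^{(1)}_0=2$ has no roots.

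\emph{Second kind.} If $D^{(2)}_m(x;a)=0$, then $\alpha D^{(2)}_m(x;a)=0$, and Lemma~\ref{lem:binet} gives $(m+1)\alpha^{m+1}=0$. Since $m+1\ge1$ is a nonzero integer and $\alpha\neq0$ in the field $\C$, this is impossible. Again $m=0$ is vacuous, because $D^{(2)}_0=1$.

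In both cases we reach a contradiction, so $\alpha\neq\beta$.

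The only point requiring care is that we work over $\C$ (characteristic zero), as the Remark preceding the lemma specifies. There the factors $2$ and $m+1$ are nonzero, and nonzero elements cannot multiply to zero. No deeper obstacle is expected; this nondegeneracy is exactly what later allows the Binet formulas in the $\alpha\neq\beta$ form to turn the root conditions into root-of-unity conditions on $\alpha/\beta$.
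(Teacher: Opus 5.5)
Your proposal is correct and follows the paper's proof: assume $\alpha=\beta$, so $\alpha\neq0$ because $a=\alpha^2\neq0$, and then the degenerate case of Lemma~\ref{lem:binet} gives $2\alpha^m=0$ or $(m+1)\alpha^{m+1}=0$, which is impossible. Your extra remarks on working over $\C$ (characteristic zero) and on the vacuous case $m=0$ simply make explicit what the paper's one-line argument leaves implicit.
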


\begin{proof}
Since $\alpha\beta=a\ne0$, we have $\alpha\ne0$. If $\alpha=\beta$, Lemma~\ref{lem:binet}
gives $(m+1)\alpha^{m+1}=0$ or $2\alpha^m=0$, which is a contradiction.
\end{proof}

\begin{lemma}\label{lem:alpha}
Let $a\neq0$. Then the following hold:
\begin{enumerate}
\item\label{it:fst} If $D^{(1)}_m(x;a)=0$, then $\alpha^{2m}=-a^{m}$.
\item\label{it:sec} If $D^{(2)}_m(x;a)=0$, then $\alpha^{2(m+1)}=a^{m+1}$.
\end{enumerate}
\end{lemma}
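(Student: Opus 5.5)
The plan is to read off the identities from Lemma~\ref{lem:binet} applied to the roots $\alpha,\beta$ of $\chi_{x,a}$, where $\alpha+\beta=x$ and $\alpha\beta=a$. Since $a\neq0$, both $\alpha$ and $\beta=a/\alpha$ are nonzero. By Lemma~\ref{lem:nondeg}, $\alpha\neq\beta$, so the nondegenerate formulas of Lemma~\ref{lem:binet} apply.

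For \eqref{it:fst}, suppose $D^{(1)}_m(x;a)=0$. Lemma~\ref{lem:binet} gives $\alpha^m+\beta^m=0$, so $\alpha^m=-\beta^m$. Multiplying both sides by $\alpha^m$ gives
\[
\alpha^{2m}=-(\alpha\beta)^m=-a^m .
\]

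For \eqref{it:sec}, suppose $D^{(2)}_m(x;a)=0$. Lemma~\ref{lem:binet} gives $\alpha^{m+1}-\beta^{m+1}=(\alpha-\beta)D^{(2)}_m(x;a)=0$, so $\alpha^{m+1}=\beta^{m+1}$. Multiplying by $\alpha^{m+1}$ gives
\[
\alpha^{2(m+1)}=(\alpha\beta)^{m+1}=a^{m+1}.
\]

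The only subtle point is the degenerate case $\alpha=\beta$. There the identity $(\alpha-\beta)D^{(2)}_m=\alpha^{m+1}-\beta^{m+1}$ holds trivially and tells us nothing. Lemma~\ref{lem:nondeg} rules this case out, though strictly it is only needed for part \eqref{it:sec}. Part \eqref{it:fst} does not depend on it: even when $\alpha=\beta$, the relation $\alpha^m+\beta^m=0$ from Lemma~\ref{lem:binet} still yields the claim. Since $\alpha$ and $\beta$ enter symmetrically, the same identities hold with $\beta$ in place of $\alpha$.
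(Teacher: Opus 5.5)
Your proof is correct and is essentially the paper's argument: Lemma~\ref{lem:binet} gives $\alpha^m=-\beta^m$ (respectively $\alpha^{m+1}=\beta^{m+1}$), and multiplying by $\alpha^m$ (respectively $\alpha^{m+1}$) and using $\alpha\beta=a$ finishes it. One small correction to your closing remark: Lemma~\ref{lem:nondeg} is not needed for part~(2) either, because $D^{(2)}_m(x;a)=0$ gives $\alpha^{m+1}-\beta^{m+1}=(\alpha-\beta)\cdot 0=0$ whether or not $\alpha=\beta$; and if $\alpha=\beta$, then $\alpha^2=a$, so the conclusion holds directly.
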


\begin{proof}
Using Lemmas~\ref{lem:nondeg} and~\ref{lem:binet}, respectively, we have $\alpha\neq\beta$,~
$\alpha^m=-\beta^m$ and $\alpha^{m+1}=\beta^{m+1}$. Keeping in mind the relation, 
$\alpha\beta=a$, the proof is complete.

\end{proof}


\section{Proof of Theorem~\ref{thm:main} }\label{sec:main} 
\begin{proof}[Proof of Theorem~\ref{thm:main}]
By Lemma~\ref{lem:kt-is-dickson}, $x$ is a root of $F^{(k,t)}_{n}$ if and only if $kx$ is a
root of $D^{(2)}_{n-1}(\,\cdot\,;-t)$, and a root of $L^{(k,t)}_{n}$ if and only if $kx$ is a
root of $D^{(1)}_{n}(\,\cdot\,;-t)$. Keeping in mind Lemma~\ref{lem:rescale}, 
without loss of generality, we may assume that $k=s=1$, so  $a:=-t=\pm1$ and it
is enough to prove $\OO_{\Q(x)}=\Z[x]$.
 
 Let $\beta=x-\alpha$. By Lemma~\ref{lem:nondeg}, we have $\alpha\ne\beta$, and
 $\alpha\ne0$ since $\alpha\beta=a\ne0$. Now, we split the proof into two cases.

\noindent {Case 1: When $a=-1$, i.e., $t=1$.}  Here $\alpha\beta=-1$, so $\beta=-\alpha^{-1}$ and
$x=\alpha+\beta=\alpha-\alpha^{-1}$. Moreover,
$\alpha^2\neq-1$.  Suppose that $x$ is a root of $D_{n-1}^{(2)}$ with $n$ odd, or of $D_n^{(1)}$ with $n$ even. Then using Lemma~\ref{lem:alpha}, it is easy to see that $\alpha^{2n}=-1$. Applying Proposition~\ref{prop:critA} with $N=n$, we have $\OO_{\Q(x)}=\Z[x].$ Taking $\xi=\alpha$, we get $x=\xi-\xi^{-1}$.\\
\noindent {Case 2: When $a=1$ , i.e.,  $t=-1$.}   Here $\alpha\beta=1$, and hence
$x=\alpha+\alpha^{-1}$ and $\alpha^2\ne1$. Using Lemma~\ref{lem:alpha}, we see that
$\alpha^{2n}=1$ or $-1$, according as $x$ is a root of
$D_{n-1}^{(2)}$ or $D_n^{(1)}$. So, $\alpha$ is a root of unity, and $\OO_{\Q(x)}=\Z[x]$ by
Proposition~\ref{prop:critB}. Taking
$\xi=\alpha$ yields $x=\xi+\xi^{-1}$.

\end{proof}

\begin{proof} [Proof of Corollary~\ref{cor:CGH}]
Take $k=s=t=1$ in Theorem~\ref{thm:main}. Then $\theta=x$, and the
result follows immediately.
\end{proof}

\begin{definition}\label{def:cheb}
The {Chebyshev polynomials of the first and second kind} are defined by
$$ T_0=1,\quad T_1=X,\quad T_{n+2}=2XT_{n+1}-T_n$$ and
  $$U_0=1,\quad U_1=2X,\quad U_{n+2}=2XU_{n+1}-U_n  $$ respectively.
\end{definition}
Set $C_n:=D_n^{(1)}(X;1) \text{ and } S_n:=D_n^{(2)}(X;1).$ Then, it can be easily seen that $U_n(X)=S_n(2X),~ 2T_n(X)=C_n(2X),~n\ge0.$

\begin{corollary}\label{cor:cheb}
For any $n\ge1$, the following assertions hold:
\begin{enumerate}
\item Every irreducible factor of $S_n$ or $C_n$ is monogenic;
\item If $x$ is a root of $U_n$ or $T_n$, then
$\OO_{\Q(x)}=\Z[2x].$
\end{enumerate}
\end{corollary}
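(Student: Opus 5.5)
The plan is to reduce everything to the $(k,t)=(2,-1)$-type Dickson case already handled in the proof of Theorem~\ref{thm:main}, namely Case~2 with $a=1$. By definition $C_n=D^{(1)}_n(X;1)$ and $S_n=D^{(2)}_n(X;1)$. Comparing with Lemma~\ref{lem:kt-is-dickson} with $k=1$ and $t=-1$, we get $L^{(1,-1)}_n=C_n$ and $F^{(1,-1)}_{n+1}=S_n$. Since $t=-1=-s^2$ with $s=1$, Theorem~\ref{thm:main} applies directly (third and fourth rows of the table, with no parity hypothesis). For any root $x$ of an irreducible factor $f$ of $S_n$ or $C_n$, it gives $\theta=x$ and $\OO_{\Q(x)}=\Z[x]$. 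Because $f$ is monic irreducible with root $x$, this says exactly that $f$ is monogenic, which proves (1). Note that $n\ge1$ guarantees $S_n=F^{(1,-1)}_{n+1}$ and $C_n$ have positive degree, so roots exist.

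For (2), I would use the identities stated just before the corollary: $U_n(X)=S_n(2X)$ and $2T_n(X)=C_n(2X)$. These follow by a trivial induction: both sides satisfy the same recurrence in $n$, since $S_{n+2}(2X)=2X\,S_{n+1}(2X)-S_n(2X)$, and they agree for $n=0,1$. Hence if $x$ is a root of $U_n$ (respectively $T_n$), then $2x$ is a root of $S_n$ (respectively $C_n$). Therefore $2x$ is a root of some irreducible factor of $S_n$ or $C_n$. By part (1), $\OO_{\Q(2x)}=\Z[2x]$, and since $\Q(2x)=\Q(x)$ we get $\OO_{\Q(x)}=\Z[2x]$.

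I do not expect a real obstacle here. The only points needing care are bookkeeping: the index shift $F^{(k,t)}_{n+1}=D^{(2)}_n$, and checking that the corollary's normalization $T_0=1$ matches $C_0=2$ via the factor $2$. It is also worth noting the degenerate case $\alpha^2=1$ in Proposition~\ref{prop:critB}, which Lemma~\ref{lem:nondeg} already excludes. If one prefers to avoid Theorem~\ref{thm:main}, one can argue directly: write $x=\alpha+\alpha^{-1}$, use Lemma~\ref{lem:alpha} to see that $\alpha$ is a root of unity, and invoke Proposition~\ref{prop:critB}.
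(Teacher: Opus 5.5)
Your proof is correct and follows the paper's argument. Part (1) is Theorem~\ref{thm:main} with $k=s=1$, $t=-1$, and part (2) comes from $U_n(X)=S_n(2X)$ and $2T_n(X)=C_n(2X)$; the paper absorbs the factor $2$ by invoking the theorem with $k=2$ (so $\theta=2x$ directly), whereas you apply part (1) to the root $2x$ and use $\Q(2x)=\Q(x)$, which is the same rescaling.
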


\begin{proof}
The first assertion follows from Theorem~\ref{thm:main} by taking $k=1$, $t=-1$,
and the second by taking $k=2,~t=-1$ together with the fact that
$U_n(X)=S_n(2X)$ and $2T_n(X)=C_n(2X)$.
\end{proof}


\end{document}